\newtheorem{theorem}{Theorem}[section]
\newtheorem{lemma}{Lemma}[section]
\theoremstyle{definition}
\newtheorem{open problem}{Open Problem}
\def\cal{\mathcal}
\begin{document}
\title[Weak asymptotics for Schr\"odinger evolution
 ]{Weak asymptotics for Schr\"odinger evolution}
\author{Sergey A. Denisov}
\address{
\begin{flushleft}
University of Wisconsin--Madison\\  Mathematics Department\\
480 Lincoln Dr., Madison, WI, 53706, USA\\
  denissov@math.wisc.edu
\end{flushleft}
  }
 \maketitle
\begin{abstract}
In this short note, we apply technique developed in \cite{den1} to
study the long-time evolution for Schr\"odinger equation with slowly
decaying potential.
\end{abstract} \vspace{1cm}

Consider
\[
H=-\partial^2_{xx}+q, \quad x>0
\]
with Dirichlet boundary condition at zero. If $q=0$, we denote the
operator by $H_0$. Through the paper, the potentials is real valued
and satisfies the following condition
\begin{equation}
q(x)\ln(|x|+2)\in L^2(\mathbb{R}^+) \label{ops1}
\end{equation}

 We will use the asymptotics of generalized
 eigenfunctions obtained in \cite{den1} to prove
 existence of modified wave operators.  Unfortunately,
 the limits will be understood in some averaged sense only.
 In the meantime, the methods are rather robust and
 can be used for other dispersive equations. For $q\in
 L^p(\mathbb{R}^+),\, 1\leq p<2$ the existence of modified wave operators was
 proved in \cite{ck}.\bigskip

We will start with some definitions. Assume that $f(x)\in
L^2(\mathbb{R}^+)$ and take its odd continuation to $\mathbb{R}$.
Call it $f_{o}(x)$. Then
\begin{equation}
e^{it\partial^2_{x}}f_o\sim \kappa
\frac{e^{ix^2/(4t)}}{\sqrt{t}}\hat{f}_o(x/(2t))  \quad {\rm in
\,\,\, }L^2(\mathbb{R}), \quad t\to\infty\label{ezh}
\end{equation}
where
\[
\kappa=-\frac{1}{(1+i)\sqrt{2\pi}}, \quad \hat{f_o}(\omega)=\int
f_o(x)e^{i\omega x}dx
\]
so $\hat{f}_o$ denotes the inverse Fourier transform. (In this
paper, $f\sim g$ as $t\to\infty$ if $\|f-g\|\to 0$ as $t\to\infty$
in the specified metric). The asymptotics (\ref{ezh}) is easy to
check if $\hat{f}_o$ is infinitely smooth and compactly supported
away from zero. The general $L^2$ case then follows upon making the
simple observation that the l.h.s. and the r.h.s. are unitary in $f$
and then using the approximation argument.

Then, by symmetry,
\[
e^{-iH_0t}f\sim
\kappa\frac{e^{ix^2/(4t)}}{\sqrt{t}}\hat{f}_o(x/(2t))\chi_{x>0},
\quad {\rm in}\,\, L^2(\mathbb{R}^+), \quad t\to+\infty
\]
We will need to modify the free evolution. The modification will be
made in the physical space as follows
\[
U(t)f=\kappa \frac{e^{ix^2/(4t)}}{\sqrt{t}}\hat{f}_o(x/(2t))
\exp\left(-i\frac{t}{x}\int_0^x q(s)ds\right)
\]

Let $u(x,k)$ be the solution to the Cauchy problem
\[
-u''+qu=k^2u, \quad u(0,k)=0, \quad u'(0,k)=1, \quad E=k^2
\]
and the spectral measure
\[
d\rho_E=d\rho_s(E)+\mu(E)dE
\]
The negative eigenvalues, if there are any, will be denoted by
$\{-\kappa_j^2\}, j=1,2,\ldots$. We will need the following trivial
lemma
\begin{lemma}
Assume that $f(x)$ is infinitely smooth function with compact
support, then
\[
\left|\frac{1}{\sqrt{t}} \int_x^\infty
f(st^{-1})\exp\left(i\left(\frac{s^2}{2t}-sk\right)\right)ds\right|\lesssim
\frac{\sqrt{t}}{x-kt+\sqrt{t}}
\]
for any $x>kt$ and for $t>1, k>0$.\label{st}
\end{lemma}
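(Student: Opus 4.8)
The plan is to read this as an oscillatory integral with a single stationary point and to exploit that the lower limit $x$ lies strictly to the right of it. First I would rescale by $\sigma=s/t$, which turns the quantity into
\[
I=\sqrt{t}\int_{x/t}^{\infty} f(\sigma)\,e^{it\psi(\sigma)}\,d\sigma,\qquad \psi(\sigma)=\frac{\sigma^2}{2}-k\sigma .
\]
Here $\psi'(\sigma)=\sigma-k$ vanishes only at the stationary point $\sigma=k$, and $\psi''\equiv 1$. Writing $D=x-kt>0$, the hypothesis $x>kt$ gives $\psi'(\sigma)\ge \psi'(x/t)=D/t>0$ for every $\sigma$ in the range of integration, and there $\psi'$ is increasing. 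Since $f$ is smooth with compact support, the effective domain is a finite interval and the amplitude vanishes at its right endpoint.

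I would then establish two separate bounds and combine them. Set $G(\tau)=\int_{x/t}^{\tau}e^{it\psi(\sigma)}\,d\sigma$. One integration by parts gives $I=-\sqrt{t}\int_{x/t}^{\infty}f'(\sigma)\,G(\sigma)\,d\sigma$ with no boundary contribution, so $|I|\le\sqrt{t}\,\|f'\|_{L^1}\,\sup_\tau|G(\tau)|$, and it remains to bound $\sup_\tau|G|$ in two ways. The second-derivative van der Corput estimate applies because $|(t\psi)''|=t$, giving $\sup_\tau|G|\lesssim t^{-1/2}$ and hence $|I|\lesssim 1$. The first-derivative estimate applies because $(t\psi)'=t\psi'$ is monotone and bounded below by $t\cdot(D/t)=D$, giving $\sup_\tau|G|\lesssim 1/D$ and hence $|I|\lesssim \sqrt{t}/D$. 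Both constants depend only on $\|f\|_\infty$ and $\|f'\|_{L^1}$.

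Finally I combine the two estimates as $|I|\lesssim\min\bigl(1,\sqrt{t}/D\bigr)$ and invoke the elementary equivalence $\min\bigl(1,\sqrt{t}/D\bigr)\asymp \sqrt{t}/(D+\sqrt{t})$, which is exactly the claimed bound $\sqrt{t}/(x-kt+\sqrt{t})$. The delicate point is the regime $D\lesssim\sqrt{t}$, where $x$ sits within the stationary-phase scale $\sqrt{t}$ of the critical point $kt$: there the non-stationary (first-derivative) estimate degenerates like $\sqrt{t}/D$ and is useless, so one must fall back on the second-derivative estimate to keep $I$ uniformly bounded. In the complementary regime $D\gtrsim\sqrt{t}$ the integration-by-parts bound $\sqrt{t}/D$ already dominates, and the two regimes stitch together to yield the stated uniform estimate.
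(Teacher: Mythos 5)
Your argument is correct and is essentially the paper's own proof: both rescale so the phase becomes $t(\sigma^2/2-k\sigma)$, integrate by parts once to move the derivative onto $f$, and then bound the primitive of the oscillatory factor. The only cosmetic difference is that the paper centers the variable at the stationary point and invokes the single Fresnel tail estimate $\left|\int_x^\infty e^{iu^2}\,du\right|\lesssim (1+x)^{-1}$, which packages your two van der Corput bounds (the $O(t^{-1/2})$ near-stationary one and the $O(1/D)$ non-stationary one) together with the final equivalence $\min\bigl(1,\sqrt{t}/D\bigr)\asymp\sqrt{t}/(D+\sqrt{t})$ into one inequality.
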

\begin{proof}
After the change of variables $s=t(k+u)$, we are left with
\[
e^{-itk^2/2}\int_{xt^{-1}-k}^\infty f(k+u) \sqrt{t}e^{itu^2/2}du
\]
Integration by parts and the simple estimate
\[
\left|\int_x^\infty e^{iu^2}du\right|\lesssim \frac{1}{1+x}, \quad
x>0
\]
finish the proof.
\end{proof}

Below, we will use some notations from \cite{den1}. The main result
of the paper is
\begin{theorem}
For any $f\in L^2(\mathbb{R}^+)$, there is $W_f$ such that
\[
\frac{1}{T}\int_0^T \|e^{itH}U(t)f-W_f\|^2_{L^2(\mathbb{R}^+)}dt\to
0, \quad T\to\infty
\]
The exact expression for $W_f(x)$ will be given in the proof.
\end{theorem}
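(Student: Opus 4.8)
The goal is to show that the modified evolution $e^{itH}U(t)f$ converges in the Cesàro (time-averaged $L^2$) sense. My plan is to reduce the problem to the spectral representation of $H$ and to the generalized eigenfunction asymptotics from \cite{den1}, using Lemma \ref{st} to control the boundary contributions.

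First I would reduce to a dense class: it suffices to establish the claim for $f$ whose odd continuation has $\hat f_o$ smooth and compactly supported away from zero, since both $U(t)$ and $e^{itH}$ are uniformly bounded in $t$ (the former because the multiplier in $U(t)$ has modulus one away from the phase and $\|U(t)f\|$ matches $\|e^{-iH_0t}f\|$ asymptotically), so the full $L^2$ statement follows by approximation and the triangle inequality applied to the time average. For such $f$, the stationary phase relation (\ref{ezh}) shows $U(t)f$ is concentrated, in $x$, near the region $x\approx 2kt$ where $k$ runs over the support of $\hat f_o$; this is the regime in which the WKB-type asymptotics of $u(x,k)$ apply.

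The core computation is to expand $e^{itH}U(t)f$ in the generalized eigenfunctions $u(x,k)$ against the spectral measure $\mu(E)\,dE$, writing $x$ in the oscillatory factor $e^{ix^2/(4t)}$ and matching it to the phase of $u$. The modification factor $\exp\bigl(-i\frac{t}{x}\int_0^x q\bigr)$ in $U(t)$ is designed precisely to cancel the leading long-range correction appearing in the phase of the asymptotics of $u(x,k)$ from \cite{den1}; after this cancellation the remaining integral over $x$ for fixed $k$ is, up to lower-order terms, of the form treated in Lemma \ref{st}, whose bound $\sqrt t/(x-kt+\sqrt t)$ guarantees that the contributions away from the stationary point $x=kt$ (equivalently the contributions that fail to stabilize) are negligible after time averaging. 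Integrating in $k$ against $\mu(E)\,dE$ and letting $t\to\infty$ produces a limit $W_f$ expressed as an integral transform of $\hat f_o$ against $u(x,k)$ with the appropriate spectral density; the bound and continuous spingular/eigenvalue parts $d\rho_s$ contribute only a vanishing average because they oscillate in $t$ with no stationary phase in the relevant window.

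The main obstacle, and the reason only a time-averaged (weak) limit can be obtained, is that the phase matching is only approximate: the condition (\ref{ops1}), $q\ln(|x|+2)\in L^2$, is too weak to control the error terms in the eigenfunction asymptotics pointwise in $t$, so one cannot expect norm convergence of $e^{itH}U(t)f$ itself. What one \emph{can} control is the $T^{-1}\int_0^T(\cdot)\,dt$ average, where the oscillation in $t$ of the error, combined with the $L^2$ smallness of $q\ln$, forces the cross terms to vanish. Carrying this out rigorously requires estimating $\frac1T\int_0^T\|R(t)\|^2\,dt$ for the remainder $R(t)=e^{itH}U(t)f-W_f$, splitting $R$ into the genuine phase-error contribution and the Lemma \ref{st} tail, and using an interchange of the $t$-average with the spectral integration (justified by the uniform $L^2$ bounds) to convert the time average into a decaying integral in $E$; this interchange and the bookkeeping of the logarithmically weighted error from \cite{den1} is where the technical weight of the argument lies.
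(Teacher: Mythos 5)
Your outline correctly identifies the overall strategy -- pass to a dense class, expand $U(t)f$ in the generalized eigenfunctions, rely on the modifier $\exp\bigl(-i\frac{t}{x}\int_0^x q\bigr)$ to cancel the long-range phase of $u(x,k)$, and use Lemma \ref{st} to kill the contribution away from the stationary point $x=2kt$. But two load-bearing ingredients of the actual argument are missing, and in both places the mechanism you propose in their stead would not close the proof. First, the Ces\`aro average is not forced by ``oscillation in $t$ of the error'' or by cross terms averaging out; it enters because after integration by parts the main term $L_1$ contains the factor $j_m(k,2kt)$, and the only available convergence of $j_m(k,\tau)$ to its limit $j_m(k)\chi_{\Theta}(E)$ is in the time-averaged $L^2(d\rho\, dt)$ sense (Theorem 3.1 of \cite{den1}). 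That theorem is also precisely what removes the singular part of the spectral measure (via the indicator $\chi_{\Theta}$); your claim that $d\rho_s$ ``contributes only a vanishing average because it oscillates in $t$'' is not a proof and is not how the singular spectrum is handled. Likewise, the hypothesis $q\ln(|x|+2)\in L^2$ is not used to average out cross terms: it is used in a direct (non-oscillatory) estimate of the error term $L_2$, where Young's inequality produces a factor $\ln^2 T$ from $\bigl(\int_0^{C_3T}\frac{dx}{x+\sqrt T}\bigr)^2$ that must be absorbed by $\int_{C_1T}^{\infty}q^2(x)\ln^2 x\,dx\to 0$.

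Second, your proposal stops at the statement that integrating against $\mu(E)\,dE$ ``produces a limit $W_f$,'' but the convergence established on each compact energy window $[\delta,M]$ does not by itself give convergence in $L^2(\mathbb{R}^+)$: one must show that no $L^2$ mass of $U(t)f$ escapes to high energies, to the negative eigenvalues, or -- the delicate case -- accumulates near zero energy. The high-energy part follows from a uniform $W^{1,2}$ bound on $U(t)f$, but the low-energy bound (\ref{orig}) requires a separate argument (splitting $q$ into low- and high-frequency pieces, writing the relevant function as $Hl_1+l_2$ with $l_1$ bounded in $W^{2,2}$ and $l_2\to 0$) that occupies a substantial portion of the proof and is entirely absent from your sketch. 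Without these two components the argument does not yield the theorem as stated.
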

Thus, for most large $t$, we have $e^{-itH}W_f\sim U(t)f$ in
$L^2(\mathbb{R}^+)$.

\begin{proof}
It is sufficient to show that
\[
\frac{1}{T}\int_T^{2T}
\|e^{itH}U(t)f-W_f\|^2_{L^2(\mathbb{R}^+)}dt\to 0, \quad T\to\infty
\]
(this follows, e.g., from the diadic decomposition argument).
\bigskip

Assume that $f$ is such that $\hat{f}_o(k)\chi_{k>0}$ is infinitely
smooth with compact support on, say,  $[a,b]\subset \mathbb{R}^+$.
If we prove the convergence in this case, then the standard
approximation argument can handle the general case. Denote the
generalized Fourier transform of $U(t)f$ by $\breve{\psi}(t,k)$. For
$k\geq 0$,
\[
\breve{\psi}(t,k)=\int_0^\infty u(x,k)[U(t)f](x)dx
\]
Using the formula (9) from \cite{den1}, we have
\[
\breve\psi(t,k)=-I_1+I_2
\]
where
\begin{equation}
I_1=\frac{1}{2ik}\int_{2at}^{2bt}
[U(t)f](x){j_m}(k,x)e^{-ikx+i\phi(0,k,x)}dx\label{ij}
\end{equation}

\begin{equation}
I_2=\frac{1}{2ik}\int_{2at}^{2bt}
[U(t)f](x)\overline{j_m}(k,x)e^{ikx-i\phi(0,k,x)}dx \label{ij1}
\end{equation}
Splitting the integration in $I_1$ as
\begin{equation}
\int_{2at}^{2bt} =\int_{2at}^{2kt}+\int_{2kt}^{2bt}=J_1+J_2
\label{jaj}
\end{equation}
we have
\begin{eqnarray}
J_1=-\frac{\kappa}{2ik\sqrt{t}}\int_{2kt}^{2bt} {j_m}(k,x)
\exp\left(i\left(\frac{1}{2k}-\frac{t}{x}\right)\int_0^x q(s)ds\right)\cdot \hspace{2cm}\nonumber\\
\hspace{1cm}\partial_x\left(\int_x^{2bt} \hat{f}_o(s/(2t))
\exp\left(i\left(\frac{s^2}{4t}-sk\right)\right)ds\right)dx
\end{eqnarray}

Integrating by parts,
\[
J_1=L_1+L_2
\]
where
\[
L_1=\frac{\kappa}{2ik} j_m(k,2kt)\left(\frac{1}{\sqrt
{t}}\int_{2kt}^{2bt}
\hat{f}_o(s/(2t))\exp\left(i\left(\frac{s^2}{4t}-sk\right)\right)ds\right)
\]
The stationary phase argument gives
\[
\frac{e^{itk^2}}{\sqrt {t}}\int_{2kt}^{2bt}
\hat{f}_o(s/(2t))\exp\left(i\left(\frac{s^2}{4t}-sk\right)\right)ds\to
\hat{f}_o(k)\left(\sqrt{\pi/2}(1+i)\right)
\]
uniformly in $k$ over any compact. So, for any fixed $\delta, M>0$
and interval $I=[\delta,M]$,
\begin{eqnarray}
\limsup_{T\to\infty} \frac{1}{T}\int_0^T \int_I
\left|e^{itk^2}L_1(k,t)-\frac{\kappa}{2ik}\hat{f}_o(k)j_m(k)
\chi_{\Theta}(E)\sqrt{\pi/2}(1+i)\right|^2d\rho(E)\lesssim \hspace{1cm}\nonumber\\
\limsup_{T\to\infty}\frac{1}{T}\int_I d\rho(E) \int_0^T
\left|\hat{f}_o(k)(j_m(k,2kt)-j_m(k)\chi_{\Theta}(E))\right|^2dt\label{vot}
\end{eqnarray}
where $\Theta=\mathbb{R}^+\backslash \Theta_s$, $\Theta_s$ is the
support of $d\rho_s(E)$. In the argument above we also used the
uniform bound
\[
\limsup_{T\to\infty} \frac{1}{T}\int_0^T dt \int_I d\rho(E)
|j_m(k,2kt)|^2<\infty
\]
which follows from the estimate (31) in \cite{den1} after change of
variables $2kt=\tau$.

 Making the change of variables $kt=t_1$ in (\ref{vot}),
we have
\begin{eqnarray*}
\limsup_{T\to\infty}\frac{1}{T}\int_I d\rho(E) \int_0^T
\left|e^{itk^2}L_1(k,t)-\frac{\kappa}{2ik}\hat{f}_o(k)j_m(k)\chi_{\Theta}(E)\sqrt{\pi/2}(1+i)\right|^2dt
\lesssim\\
\limsup_{T\to\infty}\frac{1}{T}\int_I d\rho(E)\int_0^{C_1T}
|j_m(k,2t_1)-j_m(k)\chi_{\Theta}(E)|^2dt_1
\end{eqnarray*}
The last limit is equal to zero by the theorem 3.1 in \cite{den1}.

For $L_2$, using the formula for derivative of $\partial_x j(k,x)$
(\cite{den1}, Lemma 2.1), we get
\begin{eqnarray*}
|L_2|\lesssim \frac{1}{\sqrt t} \int_{2kt}^{2bt} |j_m(k,x)|
\left(|q(x)|+\frac{t}{x^2}\int_0^x |q(s)|ds\right)
\\\hspace{2cm}
\left(\int_x^{\infty} \hat{f}_o(s/(2t))
\exp\left(i\left(\frac{s^2}{4t}-sk\right)\right)ds\right)dx
\end{eqnarray*}
for $k\in I$. By lemma \ref{st}  we have an estimate
\[
|L_2| \lesssim  \int_{2kt}^{2bt} |j_m(k,x)|
\left(|q(x)|+\frac{1}{x}\int_0^x |q(s)|ds\right) \frac{\sqrt{t}}{(x-2kt)+\sqrt t}dx
\]
Making the change of variables $kt=t_1$ once again, we have
\[
\frac{1}{T}\int_T^{2T} \int_I |L_2(k,t)|^2d\rho(E)dt\lesssim M_1+M_2
\]
where
\[
M_1=\int_{C_1T}^{C_2T}\int_I\left(\int_{2t_1}^{C_3t_1}
\frac{|j_m(k,x)q(x)|}{(x-2t_1)+\sqrt{T}}dx\right)^2d\rho(E)dt_1
\]
\[
M_2=\int_{C_1T}^{C_2T}\int_I\left(\int_{2t_1}^{C_3t_1}
\frac{|j_m(k,x)q_1(x)|}{(x-2t_1)+\sqrt{T}}dx\right)^2d\rho(E)dt_1
\]
where
\[
q_1(x)=\frac{1}{x}\int_0^x |q(u)|du
\]
By Young inequality for convolutions and the following estimate
(\cite{den1}, (31))
\[
\sup_x\int_I |j_m(x,k)|^2d\rho(E)<\infty
\]
we have
\begin{eqnarray*}
M_1\lesssim \int_I d\rho(E) \left(\int_{C_1T}^{C_2T} |j_m(k,x)|^2
q^2(x)dx\right) \left( \int_0^{C_3T} \frac{dx}{x+\sqrt{T}}\right)^2\\
\lesssim \ln ^2T \int_{C_1T}^{C_2T} q^2(x)dx\lesssim
\int_{C_1T}^\infty q^2(x)\ln^2 x dx\to 0, \quad T\to\infty
\end{eqnarray*}
For $M_2$, the estimate is similar
\begin{eqnarray*}
M_2\lesssim \ln^2 T \int_{C_1T}^{C_2T} q_1^2(x)dx\lesssim
\frac{\ln^2T}{T}\left(\int_0^{CT} |q(u)|du\right)^2\hspace{5cm}\\
\lesssim \bar{o}(1)+ \frac{\ln^2T}{T}\left(\int_{\sqrt{T}}^{CT}
q^2(x) \ln^2 (2+x)dx\right)\left(\int_{\sqrt{T}}^T
\ln^{-2}(2+x)dx\right)\to 0, \quad T\to\infty
\end{eqnarray*}
The term $J_2$ in (\ref{jaj}) can be handled similarly and we have
\begin{eqnarray*}
\frac{1}{T}\int_I d\rho(E) \int_T^{2T}
|e^{itk^2}J_{1(2)}(k,t)-\frac{\kappa}{2ik}\hat{f}_o(k)j_m(k)\chi_{\Theta}(E)\sqrt{\pi/2}(1+i)|^2dt\to
0, \quad T\to\infty
\end{eqnarray*}

\bigskip

For $I_2$, the analysis is identical with the exception that
integration by parts gives
\[
\frac{1}{T} \int_I d\rho(E) \int_T^{2T} |I_2|^2dt\to 0
\]
Thus, for any $\delta,M>0$,
\[
\frac{1}{T}\int_T^{2T} \int_\delta^M
|\breve\psi(t,k)+\frac{\kappa}{2ik}\hat{f}_o(k)j_m(k)\chi_{\Theta}(E)\sqrt{2\pi}(1+i)|^2d\rho(E)\to
0
\]
In the statement of the theorem, we can choose $W_f(x)$ as the
function with generalized Fourier transform equal to
\[
\breve{W}_f(E)=-\frac{\kappa}{2ik}\hat{f}_o(k)j_m(k)\chi_{\Theta}(E)\sqrt{2\pi}(1+i)
\]
\bigskip

The function $\hat{f}_o(k)$ is infinitely smooth with compact
support and $U(t)f$ travels ballistically. We also have
$\sup_{t}\|U(t)f\|_{W^{1,2}(\mathbb{R})}<\infty$ and so by taking
suitable cutoff near the origin
\[
U(t)f=s_1(t)+s_2(t)
\]
where $\|s_1(t)\|_2\to 0$ and $s_2(t)\in \cal{D}(|H|^{1/2})$ with
\[
\sup_t\int (k^2+1)|\breve s_2(t,k)|^2d\rho(E)<\infty
\]
since
\[
(Hs_2,s_2)=\int_0^\infty (|s_2'|^2+q|s_2|^2)dx
\]

 Therefore,
\[
\limsup_{t\to\infty}  \int_M^\infty |\breve\psi(t,k)|^2d\rho(E)\to 0
\]
as $M\to\infty$. We are left with proving
\[
\limsup_{t\to\infty} \|P_{(-\infty,\delta]}U(t)f\|_2\to 0
\]
as $\delta\to 0$. If $e_j(x)$ is eigenfunction for negative
eigenvalue $-\kappa_j^2$ and $\|e_j\|_{L^2(\mathbb{R}^+)}=1$, then
\[
\langle U(t)f,e_j\rangle \to 0, \quad t\to\infty
\]
for each fixed $j$ so we just have to show
 that $L^2$ norm of $U(t)f$ can not accumulate near zero energy,
e.g., that
\begin{equation}
\limsup_{t\to\infty}\|P_{[-\delta,\delta]}U(t)f\|_2\to 0, \quad
\delta\to 0 \label{orig}
\end{equation}

 Write
$q(x)\chi_{x>0}=q_1(x)+q_2(x)$, where
\[
\hat{q}_1(\omega)=\hat{q}(\omega)\chi_{|\omega|<1}, \quad q_2=q-q_1
\]
Clearly, $q_1\in W^{\infty,2}(\mathbb{R}^+)$ and $q_2=v'$ where
$v\in W^{1,2}(\mathbb{R})$. The multiplicative correction in $U(t)$
is
\[
\exp\left(-i\frac{t}{x}\int_0^x
q(s)ds\right)=\exp\left(-i\frac{t}{x}\int_0^x
q_1(s)ds\right)\exp\left(-i\frac{t}{x}(v(x)-v(0))\right)
\]

We have
\[
\left(\exp\left(-itx^{-1}v(x)\right)-1\right)\frac{e^{ix^2/(4t)}}{t^{1/2}}\hat{f}_o(x/(2t))\to
0
\]
in $L^2(\mathbb{R})$ as $t\to\infty$ since
$\lim_{x\to\infty}v(x)=0$. We can write
\begin{eqnarray}
\exp\left(-i\frac{t}{x}\left(C+\int_0^x
q_1(s)ds\right)\right)\frac{e^{ix^2/(4t)}}{\sqrt{t}}\hat{f}_o(x/(2t))\hspace{2cm}
\\\hspace{2cm}
\sim C_1\exp\left(-i\frac{t}{x}\left(C+\int_0^x
q_1(s)ds\right)\right)\cal{F}\left( e^{-itw^2}
\hat{f}_o(\omega)\right)
\end{eqnarray}
where $\cal{F}$ is Fourier transform. Since $\hat{f}_o$ has support
away from the zero,
\[
\cal{F}\left( e^{-itw^2} \hat{f}_o(\omega)\right)=\psi'', \quad
\psi= -\cal{F}\left( w^{-2}e^{-itw^2} \hat{f}_o(\omega)\right)
\]
and $\psi$ travels ballistically in time as well. Therefore, we can
write
\[
\exp\left(-i\frac{t}{x}\left(C+\int_0^x
q_1(s)ds\right)\right)\cal{F}\left( e^{-itw^2}
\hat{f}_o(\omega)\right)=s_1''+s_2, \quad x>1
\]
where
\[
s_1= \exp\left(-i\frac{t}{x}\left(C+\int_0^x
q_1(s)ds\right)\right)\psi
\]
Since $s_1$ travels ballistically,
$\sup_t\|s_1\|_{W^{2,2}(\mathbb{R})}<\infty$, and  $\|s_2\|_2\to 0$
as $t\to\infty$, we can write
\[
\exp\left(-i\frac{t}{x}\left(C+\int_0^x
q_1(s)ds\right)\right)\cal{F}\left( e^{-itw^2}
\hat{f}_o(\omega)\right)=Hl_1+l_2
\]
where $l_1(0)=0,$ $\sup_t\|l_1\|_{W^{2,2}(\mathbb{R}^+)}<\infty$,
$\|l_2\|_2\to \infty, \quad t\to\infty$. These representations shows
that
\[
\limsup_{t\to\infty}\int_{\delta}^\delta
|\breve{\psi}(t,k)|^2d\rho(E)\to 0,
\]
as $\delta\to 0$ (i.e. (\ref{orig}) holds). So,
\[
\frac{1}{T}  \int_0^T dt \int_{\mathbb{R}}
|\breve\psi(t,k)-\breve{W}_f(E)|^2d\rho(E)\to 0
\]

and the theorem is proved.
\end{proof}

It is an interesting problem to try to relax (\ref{ops1}) to just
$q\in L^2(\mathbb{R}^+)$. We are not able to do that at this moment.

\bigskip\bigskip{\bf Acknowledgements.} This research was supported by Alfred
P. Sloan Research Fellowship and NSF Grant DMS-0758239.

\end{document}